\newtheorem{thm}{Theorem}[section]
\newtheorem{la}[thm]{Lemma}
\newtheorem{Defn}[thm]{Definition}
\newtheorem{Exa}[thm]{Example}
\newtheorem{Remark}[thm]{Remark}
\newtheorem{Problem}[thm]{Problem}
\newtheorem{prop}[thm]{Proposition}
\newtheorem{cor}[thm]{Corollary}
\newtheorem{Number}[thm]{\!\!}
\newenvironment{exa}{\begin{Exa}\rm}{\end{Exa}}
\newenvironment{numba}{\begin{Number}\rm}{\end{Number}}
\newenvironment{proof}{{\noindent\bf Proof.}}%
                  {\nopagebreak\hspace*{\fill}$\Box$\medskip\par}
\newcommand{\Punkt}{\nopagebreak\hspace*{\fill}$\Box$}
\newcommand{\ve}{\varepsilon}
\newcommand{\mto}{\mapsto}
\newcommand{\N}{{\mathbb N}}
\newcommand{\R}{{\mathbb R}}
\newcommand{\C}{{\mathbb C}}
\newcommand{\K}{{\mathbb K}}
\newcommand{\cg}{{\mathfrak g}}
\newcommand{\sub}{\subseteq}
\DeclareMathOperator{\id}{id}
\newcommand{\sbull}{{\scriptscriptstyle \bullet}}
\DeclareMathOperator{\Diff}{Diff}
\DeclareMathOperator{\Gh}{Gh}
\DeclareMathOperator{\Germ}{Germ}
\DeclareMathOperator{\GermDiff}{GermDiff}
\DeclareMathOperator{\Homeo}{Homeo}
\DeclareMathOperator{\graph}{graph}
\newcommand{\pl}{{\displaystyle \lim_{\longleftarrow}\, }}
\newcommand{\dl}{{\displaystyle \lim_{\longrightarrow}\, }}
\begin{document}
$\,$\\[-29mm]
\begin{center}
{\Large\bf
Completeness of locally {\boldmath$k_\omega$}-groups and\\[2mm]
related infinite-dimensional Lie groups}\\[5mm]
{\bf Helge Gl\"{o}ckner}\vspace{2mm}
\end{center}
\begin{abstract}
\hspace*{-6mm}Recall that a topological space $X$ is said to be a \emph{$k_\omega$-space}
if it is the direct limit of an ascending sequence $K_1\sub K_2\sub\cdots$
of compact Hausdorff topological spaces. If each point in a Hausdorff space $X$
has an open neighbourhood which is a $k_\omega$-space, then $X$ is called
\emph{locally $k_\omega$}. We show that a topological group is complete whenever the
underlying topological space is locally $k_\omega$.
As a consequence, every infinite-dimensional Lie group
modelled on a Silva space is complete.
\end{abstract}
{\bf Classification:}
22E65 (primary);
22A05, 
46A13, 
46M40, 
58D05 
\\[2.3mm]
{\bf Key words:} infinite-dimensional Lie group;
Silva space; (DFS)-space; hemicompact space; compactly generated space;
direct limit;
completeness
\section{Introduction and statement of the results}
Recall that a \emph{$k_\omega$-space} is a Hausdorff topological
space~$X$ which carries the direct limit topology for
an ascending sequence $K_1\sub K_2\sub\cdots$
of compact subsets $K_n\sub X$ with $\bigcup_{n\in\N}K_n=X$
(see \cite{FaT}, \cite{GGH}; cf.\ \cite{MiE} and, with different terminology, \cite{Mor}).
A topological group is called~a \emph{$k_\omega$-group}
if its underlying topological space is a $k_\omega$-space.
Hunt and Morris~\cite{HaM} showed that
every $k_\omega$-group is Weil complete,
viz., complete in its left uniformity
(cf.\ \cite{Rai} for the case of abelain $k_\omega$-groups;
see also \cite{Bra} for a recent proof).
The current work is devoted to generalizations and applications
of this fact,
with a view towards examples
in infinite-dimensional Lie theory.

Following \cite{GGH}, a Hausdorff space~$X$
is called \emph{locally $k_\omega$} if each $x\in X$
has an open neighbourhood $U\sub X$ which
is a $k_\omega$-space in the induced topology.
A topological group~$G$ is called locally~$k_\omega$
if its underlying topological space is locally~$k_\omega$.
Since every locally $k_\omega$ group has an open subgroup
which is a $k_\omega$-group \cite[Proposition 5.3]{GGH},
the Hunt-Morris Theorem implies the following:
\begin{prop}\label{locally}
Every locally $k_\omega$ topological group 
is Weil complete.\,\Punkt
\end{prop}
We consider
Lie groups modelled on arbitrary real or complex Hausdorff locally convex topological
vector spaces (as in \cite{Nee} and \cite{GaN}),\footnote{Compare also
\cite{Mil} (for Lie groups modelled
on sequentially complete spaces).}
based on the differential calculus in locally convex spaces
known as Keller's $C^\infty_c$-theory~\cite{Kel}.
Since the smooth maps under consideration are, in particular,
continuous, the Lie groups we consider have continuous group operations.
They can
therefore
be regarded as Hausdorff topological groups,
and we can ask when they are complete.
The
following
observation (proved in Section~\ref{secmod}) is essential.
\begin{prop}\label{modelloc}
If a Lie group $G$ is modelled on a locally convex space~$E$ which is a $k_\omega$-space,
then $G$ is locally $k_\omega$, and its identity component~$G_e$ is~$k_\omega$.
\end{prop}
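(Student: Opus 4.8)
The plan is to first reduce the local statement to a fact about the model space, and then to derive the assertion about $G_e$ from the quoted structure theory. Since $G$ is a topological group, left translations are homeomorphisms, so the property of being locally $k_\omega$ need only be checked at the identity $e$. Choosing a chart $\phi\colon U\to V$ of $G$ around $e$, with $U\sub G$ open and $V\sub E$ open, identifies a neighbourhood of $e$ with an open subset $V$ of the modelling space. Everything then rests on the following key lemma: every open subset of a locally convex $k_\omega$-space is again a $k_\omega$-space. Granting this, $V$ is $k_\omega$, hence $U$ is a $k_\omega$ open neighbourhood of $e$, and translating shows $G$ locally $k_\omega$.

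To prove the key lemma I would use the known fact that a locally convex $k_\omega$-space is a Silva space, so that $E=\dl E_n$ for an ascending sequence of Banach spaces $E_n$ whose bonding inclusions $E_n\hookrightarrow E_{n+1}$ are compact operators; every compact subset of $E$ then lies, and is compact, in some $E_n$, and bounded subsets of $E_n$ are relatively compact in $E_{n+1}$. Given open $V\sub E$, I would exhaust $V$ by the compact sets $C_m$ defined as the $E_{m+1}$-closures of $A_m:=\{x\in E_m:\|x\|_m\le m,\ \operatorname{dist}_{E_{m+1}}(x,E_{m+1}\setminus V)\ge 1/m\}$; the distance condition is closed in $E_{m+1}$ and forces $C_m\sub V$, while boundedness in $E_m$ together with compactness of the inclusion makes $C_m$ compact in $E$. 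After passing to a cofinal increasing subsequence, one checks that $\bigcup_m C_m=V$ and that $V$ carries the final topology with respect to the $C_m$. For the latter I would argue via the $k$-space property: $E$ is a $k$-space and open subspaces of $k$-spaces are $k$-spaces, so it suffices to see that a set meeting every $C_m$ in a closed set meets every compact $K\sub V$ in a closed set; this holds because such a $K$ lies in some $E_n$, has positive distance there to the complement of $V$, and is therefore contained in $C_m$ for all large~$m$.

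I expect the main obstacle to be precisely this verification of the direct limit topology on $V$: arranging that the exhausting compacta remain inside $V$ (handled by imposing the distance condition in $E_{m+1}$ rather than in $E_m$), and that they are genuinely cofinal among the compact subsets of $V$.

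Finally, for the identity component, I would first note that $E$, being locally convex, admits a basis of convex, hence connected, neighbourhoods, so $E$ and therefore the manifold $G$ are locally connected; consequently $G_e$ is open, and being an open subgroup it is also closed. By \cite[Proposition 5.3]{GGH}, the now-established local $k_\omega$ property of $G$ furnishes an open $k_\omega$ subgroup $H\sub G$; since $H$ is clopen and contains $e$, the connected set $G_e$ lies in $H$, and $G_e$ is closed in $H$. As closed subspaces of $k_\omega$-spaces are $k_\omega$ (intersecting the defining compact exhaustion with the closed subset), it follows that $G_e$ is $k_\omega$, which completes the argument.
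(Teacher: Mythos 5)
There is a genuine gap, and it sits exactly where you placed the weight of the proof. Your key lemma is established only via the purported ``known fact'' that every locally convex $k_\omega$-space is a Silva space, and that fact is false. By the Banach--Dieudonn\'{e} theorem, for an infinite-dimensional Banach space $X$ the dual $X'$ with the topology $\tau_c$ of compact convergence carries the final topology with respect to the ascending sequence of weak-$*$-compact polars $nB^\circ$ ($B$ the unit ball of $X$), so $X'_c$ is a locally convex $k_\omega$-space; but $X'_c$ is not barrelled --- the norm unit ball of $X'$ is a closed, convex, balanced, absorbing set which is not a $0$-neighbourhood, since otherwise $\tau_c$ would be the norm topology and an infinite-dimensional normed space is never hemicompact, by Baire's theorem --- whereas every Silva space, being a countable locally convex direct limit of Banach spaces, is ultrabornological and in particular barrelled. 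Thus, for instance, $(\ell^\infty,\tau_c)=(\ell^1)'_c$ is locally convex and $k_\omega$ but not Silva, so your argument proves the proposition only for Silva modelling spaces, not in the stated generality (and whether open subsets of arbitrary locally convex $k_\omega$-spaces are $k_\omega$ is not addressed by it at all). The paper sidesteps this entirely: it never needs open subsets of~$E$ to be $k_\omega$, only the weaker, fully general fact that every point of an open subset of a $k_\omega$-space has an open $k_\omega$-neighbourhood, quoted as \cite[Proposition 4.2\,(g)]{GGH} --- which is all that ``locally $k_\omega$'' requires. (Your treatment of $G_e$ does match the paper's, which cites \cite[Propositions 5.3 and 5.2\,(b)]{GGH}; your detour through local connectedness and openness of $G_e$ is superfluous, since $G_e\sub H$ and closedness of $G_e$ already follow from $H$ being clopen and components being closed.)

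Even within the Silva case your exhaustion is quantitatively flawed: coupling the layer $E_m$, the radius $m$ and the margin $1/m$ in a single index fails. Concretely, in $E=\R^{(\N)}=\dl\,\R^m$ take $V=\{y\colon |y_k|<e^{-k}\ \text{for all }k\}$, which is open in the direct limit topology; for $x=0$ one has $\operatorname{dist}_{E_{m+1}}(0,E_{m+1}\setminus V)=e^{-(m+1)}<1/m$ for every $m$, so $0\notin C_m$ for all $m$: your sets fail even to cover~$V$, and passing to a subsequence of the same sets cannot repair this. The fix is to decouple the two parameters: for $n,m\in\N$ let $C_{n,m}$ be the set of all $x$ in the $E_{n+1}$-closure of the radius-$m$ ball of $E_n$ satisfying $\operatorname{dist}_{E_{n+1}}(x,E_{n+1}\setminus V)\geq 1/m$. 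Each $C_{n,m}$ is compact (bounded in $E_n$, closed in $E_{n+1}$, compact bonding map) and contained in~$V$, and by compact regularity of Silva limits every compact $K\sub V$ is compact in some $E_n$, hence bounded there and at positive $E_{n+1}$-distance from the closed set $E_{n+1}\setminus V$, hence contained in $C_{n,m}$ for this fixed $n$ and all large~$m$; combined with your (correct) hemicompact-plus-$k$-space argument, this countable family shows that open subsets of Silva spaces are indeed $k_\omega$. But that rescues only the Silva case; to prove the proposition as stated you should abandon the strengthened lemma and argue, as the paper does, with ``locally $k_\omega$'' throughout.
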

Propositions \ref{locally} and \ref{modelloc}
entail a simple completeness criterion:
\begin{cor}\label{ifmodel}
Every Lie group modelled on a $k_\omega$-space is Weil complete.\,\Punkt
\end{cor}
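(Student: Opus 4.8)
The plan is to derive this corollary directly from the two preceding propositions, since together they bridge the gap between the modelling space and the topological-group structure. First I would fix a Lie group $G$ whose modelling space $E$ is a locally convex space that is a $k_\omega$-space; this is exactly the hypothesis required by Proposition~\ref{modelloc}. Applying that proposition, I obtain that the underlying topological space of $G$ is locally $k_\omega$. The one point worth checking is that the phrase ``modelled on a $k_\omega$-space'' in the statement of Corollary~\ref{ifmodel} refers to a modelling space which is simultaneously locally convex (as all modelling spaces in this paper are, by the conventions fixed before Proposition~\ref{modelloc}) and a $k_\omega$-space, so that the hypotheses of Proposition~\ref{modelloc} are genuinely met.

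Having established that $G$ is locally $k_\omega$ as a topological group, I would then invoke Proposition~\ref{locally}, which asserts that every locally $k_\omega$ topological group is Weil complete. This at once yields that $G$ is Weil complete, completing the argument.

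There is essentially no obstacle to overcome at this stage: all the substantive work has already been carried out in Propositions~\ref{locally} and~\ref{modelloc}, and the corollary is simply their conjunction. The one conceptual check worth emphasising is that a Lie group in the sense used here is in particular a topological group, since its group operations are continuous (as noted in the text, because the smooth maps under consideration are continuous). Consequently ``Weil complete'', i.e.\ completeness in the left uniformity, is a meaningful notion to which Proposition~\ref{locally} applies, and the chain of implications is legitimate.
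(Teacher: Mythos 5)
Your proposal is correct and is exactly the paper's argument: the corollary is stated as an immediate consequence of Propositions~\ref{locally} and~\ref{modelloc}, combined in precisely the order you use (first pass from the $k_\omega$ modelling space to the locally $k_\omega$ property of~$G$, then conclude Weil completeness). Your additional checks --- that the modelling space is locally convex by the paper's standing conventions, and that a Lie group here is in particular a topological group --- are sound and match the remarks the paper makes before stating Proposition~\ref{modelloc}.
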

Recall that a locally convex space $E$ is called a \emph{Silva space}
if it is the locally convex direct limit $E=\dl E_n$\vspace{-.7mm} of an ascending
sequence
$E_1\sub E_2\sub\cdots$
of Banach spaces, such that each inclusion map $E_n\to E_{n+1}$
is a compact operator (cf.\ \cite{Flo}).
It is well known that every Silva space is a $k_\omega$-space
(see, e.g., \cite[Example 9.4]{JFA}).
Thus Corollary~\ref{ifmodel} entails:
\begin{cor}\label{Silva}
Lie groups modelled on Silva spaces are Weil
complete.\,\Punkt
\end{cor}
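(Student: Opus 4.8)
The plan is to derive Corollary~\ref{Silva} as an immediate consequence of Corollary~\ref{ifmodel}, so the entire content of the argument reduces to a single fact: \emph{every Silva space is a $k_\omega$-space}. Once this is known, a Lie group modelled on a Silva space is in particular modelled on a $k_\omega$-space, and Corollary~\ref{ifmodel} yields Weil completeness directly. Thus the real work lies in verifying the bracketed claim, which the excerpt attributes to \cite[Example 9.4]{JFA}; I would reproduce a proof of it for completeness.

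\medskip

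First I would set up the exhausting sequence. Let $E=\dl E_n$ be a Silva space, presented as a locally convex direct limit of an ascending sequence of Banach spaces $E_1\sub E_2\sub\cdots$ with each inclusion $E_n\to E_{n+1}$ a compact operator. The key structural consequence I would extract is that, for each $n$, the closed unit ball $B_n\sub E_n$ has relatively compact image in $E_{n+1}$; denote by $K_n$ the closure in $E_{n+1}$ (equivalently, in $E$) of the image of the ball of radius $n$ in $E_n$. Each $K_n$ is then a compact subset of $E$, and the sequence $(K_n)_{n\in\N}$ is ascending with $\bigcup_{n\in\N}K_n=E$, since every element of $E$ lies in some $E_n$ and hence in some ball of finite radius.

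\medskip

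The crux of the argument — and the step I expect to be the main obstacle — is to show that $E$ carries the \emph{direct limit} topology with respect to the sequence $(K_n)$, i.e.\ that a set $A\sub E$ is closed whenever $A\cap K_n$ is closed in $K_n$ for every $n$. This is where the compactness of the linking maps is essential: for a general countable direct limit of Banach spaces the direct limit topology need not coincide with the $k_\omega$-topology, but compactness of the inclusions forces the two to agree. I would establish this by showing that the locally convex direct limit topology on $E$ is compatible with the given compact exhaustion, using that compactness of $E_n\hookrightarrow E_{n+1}$ means every bounded subset of $E_n$ is precompact in $E_{n+1}$, so that the Silva space is in fact regular as a direct limit (bounded sets, and more importantly compact sets, are inherited at some finite stage). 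The Hausdorff property of $E$ is clear since it is a locally convex space. Together these yield that $E$ is a $k_\omega$-space with the $K_n$ as an exhausting sequence of compact sets.

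\medskip

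With the claim in hand, the conclusion is immediate and requires no further hypotheses on the Lie group: completeness is obtained purely from the topological nature of the model space via Corollary~\ref{ifmodel}, which in turn rests on Propositions~\ref{locally} and~\ref{modelloc}. I would therefore present the proof as a short deduction, relegating the verification that Silva spaces are $k_\omega$ either to the citation or to a brief self-contained argument along the lines above, and then invoke Corollary~\ref{ifmodel} to finish.
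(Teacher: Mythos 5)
Your proposal is correct and follows exactly the paper's route: the paper deduces Corollary~\ref{Silva} from Corollary~\ref{ifmodel} together with the single fact that every Silva space is a $k_\omega$-space, which it simply cites as well known (\cite[Example 9.4]{JFA}). Your additional sketch of that fact---compact exhaustion by closures of balls, plus compact regularity of the direct limit to identify the locally convex topology with the $k_\omega$-topology---is the standard argument behind the citation and is sound.
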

Until recently, little was known on the completeness properties
of infinite-dimensional Lie groups, except for the classical
fact that Lie groups modelled on Banach spaces are
Weil complete (see Proposition~1 in \cite[Chapter~III, \S1.1]{Bou}).
In 2016, Weil completeness was established
for many classes of infinite-dimensional Lie groups~\cite{Com},
but some examples modelled on Silva spaces could not be treated.
The current paper closes this gap, as Corollary~\ref{Silva}
establishes Weil completeness for the latter.
Section~\ref{secexa} compiles a list 
of infinite-dimensional Lie groups which are modelled
on Silva spaces and hence Weil complete (by Corollary~\ref{Silva}).
In particular, we find:
\begin{itemize}
\item
For each compact real analytic manifold~$M$, the Lie group
$\Diff^\omega(M)$
of all real analytic diffeomorphisms $\phi\colon M\to M$
is Weil complete;
\item
For each finite-dimensional Lie group~$G$
and $M$ as before,
the Lie group $C^\omega(M,G)$
of all real analytic maps $f\colon M\to G$
is Weil complete.
\end{itemize}
Being modelled on Silva spaces, the examples we consider
are locally $k_\omega$ (by Proposition~\ref{modelloc}),
which is sufficient to conclude Weil completeness.
Yet, it is natural to ask whether the groups in question
are not only locally $k_\omega$, but, actually,  $k_\omega$-groups.
Results in Section~\ref{seckomeg} subsume:
\begin{prop}\label{arekomeg}
For each compact real analytic manifold~$M$,
the Lie group $\Diff^\omega(M)$ is a $k_\omega$-group.
Moreover, $C^\omega(M,G)$ is a $k_\omega$-group
for each
$\sigma$-compact finite-dimensional Lie group~$G$,
and~$M$ as before.
\end{prop}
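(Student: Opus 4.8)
The plan is to show that the underlying topological space of each of these Lie groups is a $k_\omega$-space, by exhibiting an explicit ascending exhaustion by compact sets carrying the direct limit topology. Since the modelling spaces are Silva spaces, hence $k_\omega$, Proposition~\ref{modelloc} already tells us that the identity component $G_e$ is a $k_\omega$-space. The remaining issue is to pass from the identity component to the whole group. A group is a $k_\omega$-space as soon as it has an open subgroup which is $k_\omega$ and the quotient by that subgroup is countable, since a countable disjoint union (or more precisely a countable union of translates) of $k_\omega$-spaces is again $k_\omega$. Thus the strategy splits into two parts: verify that $G_e$ is $k_\omega$ (given by Proposition~\ref{modelloc}), and verify that the group of connected components $G/G_e$ is countable.

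For $\Diff^\omega(M)$ the first step is immediate from Proposition~\ref{modelloc} once we recall that the modelling space is the Silva space of real analytic vector fields on the compact manifold~$M$. For the countability of components, I would argue that a compact manifold has only countably many isotopy classes of real analytic diffeomorphisms; concretely, $\Diff^\omega(M)$ embeds continuously (indeed as a dense subgroup, or at least compatibly) into the smooth diffeomorphism group $\Diff(M)$, and $\pi_0(\Diff(M))$ is countable for a compact manifold. One then checks that the induced map on components $\pi_0(\Diff^\omega(M)) \to \pi_0(\Diff(M))$ has countable fibres, or more directly that $\Diff^\omega(M)$ has countably many components in its own right. Either way, finitely or countably many cosets of the open $k_\omega$-subgroup~$G_e$ cover the group, and the $k_\omega$ property is inherited.

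For $C^\omega(M,G)$ with $G$ a $\sigma$-compact finite-dimensional Lie group, the modelling space is the Silva space $C^\omega(M,\cg)$ of real analytic maps into the Lie algebra, so again $G_e$ is $k_\omega$ by Proposition~\ref{modelloc}. The component count here reduces to the homotopy-theoretic structure of continuous maps $M\to G$: the set of components of $C^\omega(M,G)$ should map onto, or be controlled by, the set $[M,G]$ of homotopy classes. Since $M$ is a compact manifold and $G$ is $\sigma$-compact with countably many components and finitely generated homotopy groups in the relevant range, $[M,G]$ is countable; combined with the $\sigma$-compactness hypothesis on $G$ (which guarantees $\pi_0(G)$ is countable) one concludes that $C^\omega(M,G)$ has countably many connected components. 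The open $k_\omega$-subgroup $G_e$ together with countably many cosets then exhibits the whole group as a $k_\omega$-space.

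The main obstacle I anticipate is the component-counting step, specifically justifying that $\pi_0$ of the analytic mapping and diffeomorphism groups is countable and that the relevant comparison maps with the smooth or continuous categories behave well. This requires knowing that the inclusion $\Diff^\omega(M)\hookrightarrow\Diff(M)$ (and $C^\omega(M,G)\hookrightarrow C(M,G)$) induces a well-understood map on path components, and that path-connectedness in the analytic topology matches connectedness; an analytic path joining two analytic maps need not exist even when a continuous homotopy does, so some care, perhaps an approximation or density argument showing analytic maps are homotopically dense in the relevant sense, is needed to transfer countability of $[M,G]$ and $\pi_0(\Diff(M))$ to the analytic groups. Once countability of components is secured, assembling the $k_\omega$ property from the open subgroup and countably many translates is routine.
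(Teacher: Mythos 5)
Your overall reduction is structurally sound, and in fact it is the same skeleton the paper uses inside Lemma~\ref{denso}: the identity component $G_e$ is open (the modelling space is connected, so the group is locally connected) and is $k_\omega$ by Proposition~\ref{modelloc}, each coset is homeomorphic to $G_e$, and a countable topological sum of $k_\omega$-spaces is $k_\omega$ by \cite[Proposition 4.2\,(e)]{GGH}. So everything hinges on the countability of the set of cosets, i.e.\ of $\pi_0$ of the analytic groups --- and that is precisely the step you leave unproven. Your proposed route, comparing $\pi_0(\Diff^\omega(M))$ with $\pi_0(\Diff(M))$ and $\pi_0(C^\omega(M,G))$ with the homotopy set $[M,G]$, founders on exactly the fibre problem you flag yourself: the Silva-type inductive limit topology on $\Gamma^\omega(TM)$ (and on $C^\omega(M,\cg)$) is strictly finer than the induced smooth or compact-open topology, so two analytic maps that are smoothly isotopic, or continuously homotopic, need not a priori lie in the same connected component of the analytic group. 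The assertion that the comparison maps on $\pi_0$ have countable fibres is therefore not a technical check but the entire content of the claim, and no argument is supplied for it; an ``approximation or density argument'' producing analytic homotopies compatible with the inductive-limit topology is not routine and is nowhere carried out. As written, the proposal assumes what it needs to prove, so there is a genuine gap.

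It is instructive to see how the paper avoids homotopy theory altogether: it proves \emph{separability} of the groups, which automatically yields countably many cosets of the open $k_\omega$-subgroup (this is how Lemma~\ref{denso} concludes), so countability of $\pi_0$ comes out as a corollary rather than going in as an input. Separability itself is obtained from the purely group-theoretic Lemma~\ref{givesko}: one exhibits a continuous homomorphism $\alpha\colon G\to H$ into a separable metrizable group --- the inclusion $\Diff^\omega(M)\to\Homeo(M)$, respectively the restriction map $\Germ(K,G)\to C(K,G)$, with separability and metrizability of the targets supplied by Lemma~\ref{folklore} --- together with an open identity neighbourhood $U\sub H$ whose preimage $\alpha^{-1}(U)$ is separable; in both cases a chart identifies $\alpha^{-1}(U)$ with an open subset of a Silva space, which is separable by Lemmas~\ref{themodsep} and \ref{easysep}\,(a). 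Translating a countable set $C$ with $\alpha(C)$ dense in $\alpha(G)$ against a countable dense subset $D\sub\alpha^{-1}(U)$ then makes $C^{-1}D$ dense in all of $G$, with no control over components of fibres ever needed. If you want to salvage your approach, the cleanest repair is exactly this substitution: replace ``count the components'' by ``prove separability''; the data you already have (separable Silva-space charts plus the continuous inclusions into $\Homeo(M)$ and $C(M,G)$) suffices for that via Lemma~\ref{givesko}, whereas the $\pi_0$-comparison you sketch remains an open-ended problem in analytic approximation.
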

Our studies are related to an open problem by
K.-H. Neeb,
who asked whether every Lie group modelled on a complete locally convex space
is (Weil) complete (cf.\ \cite[Problem II.9]{Nee}).
As long as this problem remains open, it is natural to look for classes $\Omega$
of locally convex spaces such that every Lie group modelled on a space $E\in\Omega$
is Weil complete. Banach spaces form one such class~$\Omega$.
By Corollary~\ref{ifmodel}, locally convex spaces whose underlying topological
space is~$k_\omega$
furnish a second such class.\\[2.3mm]
\emph{Acknowledgement.}
The author is grateful to Hans Braun (Paderborn) for discussions
concerning Ra\u{\i}kov's work~\cite{Rai},
and to Alexander Schmeding (Trondheim)
for comments concerning the manifold structure on $\Diff^\omega(M)$.
\section{Proof of Proposition~\ref{modelloc}}\label{secmod}
For each $g\in G$, there exists a homeomorphism $\phi\colon U\to V$
from an open neighbourhood $U\sub G$ of~$g$ onto an open subset $V\sub E$.
By \cite[Proposition 4.2\,(g)]{GGH}, $V$ contains an open neighbourhood $W$ of~$\phi(g)$
which is a $k_\omega$-space in the induced topology.
Then $\phi^{-1}(W)$ is an open neighbourhood of~$g$ in~$G$
and a $k_\omega$-space. Thus $G$ is locally~$k_\omega$.
Hence~$G$ has an open subgroup~$S$ which is~$k_\omega$
and since $G_e$ is a closed subgroup of~$S$, it is~$k_\omega$ as well
(see \cite[Propositions 5.3 and 5.2\,(b)]{GGH}).\,\Punkt
\section{Examples of Weil complete Lie groups}\label{secexa}
Corollary~\ref{Silva} applies to many classes of infinite-dimensional Lie groups.
\begin{exa}\label{diffana}
If $M$ is a compact real analytic manifold (without boundary or corners),
then the group $\Diff^\omega(M)$ of all real analytic diffeomorphisms
$\phi\colon M\to M$
is a Lie group modelled on the Silva space $\Gamma^\omega(TM)$ of all real analytic vector fields on~$M$
(see \cite{Les}, \cite{KaM}, \cite{DaS}).
By Corollary~\ref{Silva}, $\Diff^\omega(M)$ is Weil complete.
\end{exa}
\begin{exa}\label{exgerm}
Let $G$ be a finite-dimensional Lie group over $\K\in\{\R,\C\}$,
with modelling space~$\cg$,
and~$K$ be a non-empty compact subset of a finite-dimensional
$\K$-vector space~$E$.
Given a $\K$-analytic map $f\colon U\to G$ on an open subset $U\sub E$ with $K\sub U$,
write $[f]$ for its germ around~$K$.
Then the group
$\Germ(K,G)$ of all such germs $[f]$
is a Lie group modelled on the Silva space $\Germ(K,\cg)$~(see \cite{GER});
likewise if $K\not= \emptyset$ is a compact subset of a finite-dimensional
$\K$-analytic manifold~$M$ (see~\cite{DGS}).
By Corollary~\ref{Silva}, $\Germ(K,G)$ is Weil complete.
\end{exa}
\begin{exa}\label{anacomp}
If $M$ is a compact real analytic manifold, taking $\K:=\R$ and $K:=M$ in Example~\ref{exgerm}
we see that the Lie group $C^\omega(M,G)$
of $G$-valued real analytic maps on~$M$ (which can be identified with $\Germ(M,G)$)
is Weil complete, for each finite-dimensional Lie group~$G$.
\end{exa}
\begin{exa}\label{anaR}
If $G$ is a Lie group modelled on a finite-dimensional real vector space~$\cg$, then the
group $C^\omega(\R,G)$ of all real analytic mappings\linebreak
$f\colon\R\to G$
can be made a Lie group modelled on the projective limit
$C^\omega(\R,\cg)$ $=\pl \Germ([{-n},n],\cg)$\vspace{-.7mm}
of Silva spaces,
using the strategy of~\cite{NaW} (cf.\ \cite{DGS}).
As $\Germ([{-n},n],G)$ is a Weil complete Hausdorff group for each $n\in\N$ by Example~\ref{anacomp}
and $C^\omega(\R,G)=\pl \!\Germ([{-n},n],G)$,\vspace{-.7mm} we see that
$C^\omega(\R,G)$ is Weil complete.
\end{exa}
\begin{exa}\label{germdiff}
For $\K\in\{\R,\C\}$ and a non-empty compact subset~$K$
of a finite-dimensional $\K$-vector space~$E$, consider the group
$\GermDiff(K)$ of germs~$[f]$ around~$K$ of $\K$-analytic diffeomorphisms
$f\colon U\to V$ between open subsets $U,V\sub E$ with $K\sub U\cap V$, such that $f|_K=\id_K$.
Then $\GermDiff(K)$ is a Lie group modelled on the Silva space $\Germ(K,E)_0$
of all germs of $E$-valued $\K$-analytic maps around $K\sub E$
which vanish on~$K$ (see \cite{JFA}). By Corollary~\ref{Silva},
$\GermDiff(K)$ is Weil complete.
As a special case, taking $\K:=\C$, $E:=\C^n$ and $K:=\{0\}$,
we see that the Lie groups $\Gh_n(\C)=\GermDiff(\{0\})$
studied by Pisanelli~\cite{Pis} are Weil complete.
We also mention that certain Lie groups of real
analytic diffeomorphisms considered by Leitenberger (see~\cite{Lei})
are closed subgroups of $\GermDiff(\{0\})$
with $\K:=\R$, $E:=\R^n$, and $K:=\{0\}$,
whence they are Weil complete as well.
\end{exa}
\begin{exa}
The tame Butcher group over $\K\in\{\R,\C\}$ (see \cite{BfS})
is an infinite-dimensional Lie group related to
integration methods for
ordinary differential equations in numerical analysis.
As the tame Butcher group is globally diffeomorphic
to a Silva space (see \cite[Lemma~1.17]{BfS}), it is a $k_\omega$-group
and hence Weil complete, by the Hunt-Morris Theorem.
\end{exa}
\begin{exa}
Let $M$ be a compact smooth manifold, $G$ be a Lie group modelled on a finite-dimensional real vector space~$\cg$, and $s$ be a real number such that $s\geq \dim(M)/2$.
Then a Lie group $H^{>s}(M,G)$ can be defined
which is modelled on the Silva space
$\dl H^{s+\frac{1}{n}}(M,\cg)$\vspace{-.4mm}
of all $\cg$-valued functions on~$M$ which are of Sobolev class $t$ for some $t>s$
(see \cite{GaT}). By Lemma~\ref{Silva}, $H^{>s}(M,G)$
is Weil complete.
\end{exa}
\begin{exa}\label{outslie}
Consider an ascending sequence $G_1\sub G_2\sub\cdots$
of finite-dimensional real Lie groups
such that all inclusion maps $G_n\to G_{n+1}$
are smooth group homomorphisms.
Then $G:=\bigcup_{n\in\N}G_n$
can be made a Lie group modelled on the Silva space
$\dl\,L(G_n)$\vspace{-.3mm} (see \cite{DL},
or also \cite{NRW}, \cite{KaM} for special cases).
By Corollary~\ref{Silva}, $G$ is Weil complete.
\end{exa}
\section{Proof of Proposition~\ref{arekomeg}}\label{seckomeg}
In this section, we prove Proposition~\ref{arekomeg}
and obtain results concerning
separabilty and the $k_\omega$-property
also for some further examples of Lie groups.

Recall that a topological space is called \emph{separable}
if it has a countable, dense subset.
The following observation will help us to prove Proposition~\ref{arekomeg}:
\begin{la}\label{denso}
If a topological group $G$ is separable and locally $k_\omega$,
then $G$ is a $k_\omega$-group.
\end{la}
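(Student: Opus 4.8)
The plan is to combine the open $k_\omega$-subgroup furnished by local $k_\omega$-ness with separability, which forces that subgroup to have only countably many cosets, and then to assemble a compact exhaustion of $G$ from the exhaustions of the individual cosets by a diagonal argument.

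First I would invoke \cite[Proposition 5.3]{GGH} to obtain an open subgroup $S\sub G$ which is a $k_\omega$-space, say with an ascending exhaustion $K_1\sub K_2\sub\cdots$ by compact sets carrying the direct limit topology, $S=\dl K_m$. Since $S$ is open, each left coset $gS$ is open (and hence closed) in~$G$, and the cosets partition~$G$. Let $D\sub G$ be a countable dense subset; as each coset is a non-empty open set, it must meet~$D$, and since distinct cosets are disjoint, choosing one point of $D$ in each coset yields an injection from the set $G/S$ of cosets into~$D$. Hence $G/S$ is countable, and I can enumerate the cosets as $g_1S,g_2S,\ldots$ (finitely or countably many; the finite case is even easier).

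Next I would record that $G$ is the topological coproduct of its clopen cosets: a subset $A\sub G$ is closed if and only if $A\cap g_nS$ is closed in $g_nS$ for every~$n$, because the $g_nS$ are open and cover~$G$. Left translation by $g_n$ is a homeomorphism $S\to g_nS$, so each coset is a $k_\omega$-space with exhaustion $(g_nK_m)_m$, and $A\cap g_nS$ is closed in $g_nS$ precisely when $A\cap g_nK_m$ is compact for all~$m$. Setting $L_N:=\bigcup_{n\le N}g_nK_N$, I obtain an ascending sequence of compact (hence compact Hausdorff, as $G$ is Hausdorff) subsets with $\bigcup_{N\in\N}L_N=G$.

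It then remains to verify that $G$ carries the direct limit topology determined by $(L_N)$. One direction is immediate, since a closed subset of $G$ meets each compact $L_N$ in a compact set. Conversely, if $A\cap L_N$ is compact---hence closed in the Hausdorff group~$G$---for every~$N$, then for arbitrary $n,m$ I choose $N:=\max\{n,m\}$, so that $g_nK_m\sub L_N$ and $A\cap g_nK_m=(A\cap L_N)\cap g_nK_m$ is an intersection of two compact sets, therefore compact. By the coproduct description this shows $A$ is closed, so $G=\dl L_N$ is a $k_\omega$-space. The main obstacle is the first step: converting separability into countability of the coset space via the clopenness of open subgroups; once the index is countable, the rest is the standard fact that a countable topological coproduct of $k_\omega$-spaces is again $k_\omega$, realized here by the diagonal exhaustion $(L_N)$.
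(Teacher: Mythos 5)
Your proof is correct and takes essentially the same route as the paper: an open $k_\omega$-subgroup from \cite[Proposition 5.3]{GGH}, separability forcing a countable set of coset representatives inside the dense set, and the resulting decomposition of $G$ as a countable topological sum of clopen $k_\omega$-cosets. The only difference is that you verify by hand, via the diagonal exhaustion $L_N=\bigcup_{n\le N}g_nK_N$, the fact that a countable topological sum of $k_\omega$-spaces is $k_\omega$, which the paper simply cites as \cite[Proposition 4.2\,(e)]{GGH}.
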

\begin{proof}
Let $D\sub G$ be a dense countable subset.
Since~$G$ is locally~$k_\omega$, it has an open subgroup~$U$ which is a $k_ \omega$-group
(see \cite[Proposition~5.3]{GGH}).
Then $G=DU$, enabling us to choose a subset $R\sub D$ of representatives
for the left cosets of~$U$.
Thus $G=RU$ and $rU\cap sU=\emptyset$ for all $r\not=s$ in~$R$.
As each coset is open, the disjoint union
$G=\bigcup_{r\in R}rU$
is a topological sum. Since countable topological sums of $k_\omega$-spaces
are $k_\omega$ (see \cite[Proposition 4.2\,(e)]{GGH}), we see that $G$ is a $k_\omega$-space.
\end{proof}
\begin{numba}\label{baby-metric}
Every subset of a separable metric space is separable, as is well known.
\end{numba}
The next lemma compiles further well-known elementary facts.
\begin{la}\label{easysep}
\begin{itemize}
\item[\rm(a)]
If a topological space~$X$ is separable,
then every open subset of~$X$ is separable.
\item[\rm(b)]
If $(X_j)_{j\in J}$ is a family of separable topological spaces
with countable index set $J\not=\emptyset$, then $\prod_{j\in J}X_j$
is separable when endowed with the product topology.
\item[\rm(c)]
If $(X_j)_{j\in J}$ is a family of separable topological spaces
with countable index set $J\not=\emptyset$, then also the topological
sum $\coprod_{j\in J}X_j$
is separable.
\item[\rm(d)]
If $f\colon X\to Y$ is a surjective continuous map between topological
spaces and $X$ is separable, then $Y$ is separable.
\item[\rm(e)]
If $X$ is a topological space which has a separable dense
subset, then $X$ is separable.
\item[\rm(f)]
Let $X$ be a topological space, $(X_j)_{j\in J}$ be a family of separable topological spaces
with countable index set $J\not=\emptyset$, and $(f_j)_{j\in J}$
be a family of continuous maps $f_j\colon X_j\to X$ with
$\bigcup_{j\in J}f_j(X_j)=X$. Then $X$ is separable.\,\Punkt
\end{itemize}
\end{la}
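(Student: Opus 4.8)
The plan is to prove the six items in an order that lets the composite statement (f) be deduced from the atomic ones, and to verify density everywhere through the single criterion that a subset is dense exactly when it meets every nonempty open set. Throughout I fix a countable dense set $D_j\sub X_j$ for each $X_j$ (and $D\sub X$ when $X$ is separable). With this convention, items (a), (c), (d) and (e) are immediate, (b) carries the only real content, and (f) follows by combining (c) with (d).

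For (a), the set $D\cap U$ works: any nonempty relatively open $V\sub U$ is open in $X$, so $V\cap D\neq\emptyset$, and this intersection lies in $D\cap U$ because $V\sub U$. For (d), the image $f(D)$ is countable, and for nonempty open $V\sub Y$ the preimage $f^{-1}(V)$ is open and nonempty by surjectivity, hence meets $D$, so $V$ meets $f(D)$. For (c), the union $\bigcup_{j\in J}D_j$ is a countable union of countable sets, and every nonempty open subset of $\coprod_{j\in J}X_j$ restricts to a nonempty open set in some summand $X_j$, where it meets $D_j$. For (e), if $A\sub X$ is dense and separable with countable dense $D\sub A$, then $\wb{D}$ is closed in $X$ and contains $A$ (as $D$ is dense in $A$), hence contains $\wb{A}=X$, so $D$ is dense in $X$.

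The substantive step is the countable product (b). Since $J$ is countable it has only countably many finite subsets. Fixing a basepoint $a_j\in D_j$, I take $E$ to be the set of all $(x_j)\in\prod_{j\in J}X_j$ with $x_j\in D_j$ for every $j$ and $x_j=a_j$ for all but finitely many $j$. Sorting these points by their finite set $F$ of indices where $x_j\neq a_j$ exhibits $E$ as a countable union, over finite $F\sub J$, of sets in bijection with the countable products $\prod_{j\in F}D_j$, so $E$ is countable. Density is equally direct: a basic open box $\prod_{j\in J}U_j$ has $U_j=X_j$ off a finite set $F$, and choosing $d_j\in U_j\cap D_j$ for $j\in F$ and $d_j=a_j$ otherwise places a point of $E$ inside the box.

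Finally, for (f) I form the topological sum $\coprod_{j\in J}X_j$, which is separable by (c), together with the map into $X$ that restricts to $f_j$ on each summand; this map is continuous by the universal property of the sum and surjective because $\bigcup_{j\in J}f_j(X_j)=X$, so (d) yields separability of $X$. The only obstacle anywhere in the argument is the bookkeeping in (b) that simultaneously secures countability and density of $E$; every remaining verification is a single application of the nonempty-open-set criterion.
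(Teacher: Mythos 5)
Your proof is correct: each item is verified properly, including the countable-support set $E$ in (b) (countable because $J$ has only countably many finite subsets and each $\prod_{j\in F}D_j$ with $F$ finite is countable, dense because a nonempty basic box constrains only finitely many coordinates) and the deduction of (f) from (c) and (d) via the topological sum $\coprod_{j\in J}X_j$. The paper itself gives no proof of this lemma --- it is stated as a compilation of well-known elementary facts and closed with the end-of-proof symbol --- and your arguments are exactly the standard ones that justify that omission, so there is no divergence to report.
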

If $X$ and $Y$ Hausdorff topological spaces,
we write $C(X,Y)$ for the set of all continuous maps $f\colon X\to Y$.
We shall always endow $C(X,Y)$ with the compact-open topology
(as discussed in \cite{Eng}, \cite{Str}, or also \cite[Appendix~A.5]{GaN}).
If $K$ is a compact Hausdorff topological space,
we write $\Homeo(K)$ for the group of all homeomorphisms
$\phi\colon K\to K$.
We endow $\Homeo(K)$ with the topology
induced by $C(K,K)$.
Some folklore facts concerning
the compact-open topology
will be used, as
compiled in the next lemma.
\begin{la}\label{folklore}
\begin{itemize}
\item[\rm(a)]
For each compact subset $K\sub\R^m$ and
$n\in\N$,
the locally convex space $C(K,\R^n)$ is separable.
\item[\rm(b)]
For each $K$ as before and $\sigma$-compact finite-dimensional Lie group~$G$,
the topological group $C(K,G)$ is separable.
\item[\rm(c)]
If $K$ is a compact smooth manifold $($or a compact subset of $\R^n$ for some $n\in\N)$,
then $\Homeo(K)$ is a separable topological group.
\end{itemize}
\end{la}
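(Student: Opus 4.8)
The plan is to reduce all three parts to the separability of the Banach space $C(K,\R^N)$ for a compact subset $K\sub\R^N$, using two soft facts: for compact $K$ the compact-open topology on $C(K,\R^N)$ coincides with the supremum-norm topology, so $C(K,\R^N)$ is a separable \emph{metric} space as soon as it is separable; and for any subspace $A\sub Y$ the induced inclusion $C(X,A)\hookrightarrow C(X,Y)$ is a topological embedding. Part~(a) itself I would obtain from the Stone--Weierstrass theorem: since $C(K,\R^n)\cong C(K,\R)^n$, Lemma~\ref{easysep}\,(b) reduces matters to $n=1$, and there the restrictions to $K$ of the polynomial functions $\R^m\to\R$ form a point-separating subalgebra of $C(K,\R)$ containing the constants, hence a dense one; the polynomials with rational coefficients then furnish a countable dense subset.

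For~(b) the crucial observation is that a $\sigma$-compact finite-dimensional Lie group $G$ is a second-countable smooth manifold: writing $G=\bigcup_m K_m$ with each $K_m$ compact, every $K_m$ is covered by finitely many chart domains, each of which is second countable, so $G$ has a countable base. By Whitney's embedding theorem $G$ admits a topological embedding $\iota\colon G\to\R^N$ into some Euclidean space. Post-composition $f\mapsto\iota\circ f$ then realizes $C(K,G)$ as a topological subspace of $C(K,\R^N)$, which is a separable metric space by~(a); hence $C(K,G)$ is separable by~\ref{baby-metric}.

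For~(c) I would first reduce to the Euclidean situation by embedding $K$ into some $\R^N$ --- the inclusion if $K\sub\R^n$, and Whitney's theorem if $K$ is a compact manifold. Conjugation by $\iota$ identifies $\Homeo(K)$ with $\Homeo(\iota(K))$, where $\iota(K)\sub\R^N$ is compact, so $\Homeo(\iota(K))\sub C(\iota(K),\iota(K))$ lies in the separable metric space $C(\iota(K),\R^N)$ and is separable by~\ref{baby-metric}. It remains to see that $\Homeo(K)$ is a \emph{topological} group: continuity of composition is standard, and continuity of inversion follows from compactness, since if $f_j\to f$ uniformly while $f_j^{-1}\not\to f^{-1}$, then passing to convergent subsequences of the witnessing points produces two distinct points of $K$ with the same image under $f$, contradicting injectivity.

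The routine ingredients are the Stone--Weierstrass density argument and the embedding property of $C(X,\,\cdot\,)$ in the target variable. The step demanding the most care is~(b): the hypothesis of $\sigma$-compactness must be used precisely to secure second countability (ruling out long-line type pathologies), so that a Euclidean embedding exists; once $G$ is embedded, separability is simply inherited from the metric space $C(K,\R^N)$, and the same mechanism drives~(c).
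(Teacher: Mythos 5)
Your proposal is correct and, in its overall architecture, coincides with the paper's proof: Stone--Weierstrass plus rational-coefficient polynomials and the identification $C(K,\R^n)\cong C(K,\R)^n$ for (a), and for (b) and (c) the Whitney embedding into some $\R^N$ followed by \ref{baby-metric} (subsets of separable metric spaces are separable), with your observation that $\sigma$-compactness yields second countability making explicit a hypothesis of Whitney's theorem that the paper leaves implicit. The one point where you genuinely diverge is the continuity of inversion in $\Homeo(K)$: the paper argues via closed graphs (the evaluation map $\ve\colon\Homeo(K)\times K\to K$ is continuous, hence has closed graph; swapping the last two components shows $(\phi,x)\mto\phi^{-1}(x)$ has closed graph, and compactness of $K$ then forces continuity, whence continuity of $\psi\mto\psi^{-1}$), an argument valid for any compact Hausdorff $K$. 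Your subsequence argument is instead metric in nature: it is sound, but it silently uses that $K$ is metrizable --- which holds here since $K$ embeds in $\R^N$, so the compact-open topology on $C(K,K)$ is the uniform metric topology and sequential reasoning suffices; passing to limits $a_j\to a$, $b_j\to b$, $y_j\to y$ with $f_j(a_j)=y_j$ and $f(b_j)=y_j$ indeed gives $f(a)=f(b)=y$ with $a\not=b$, contradicting injectivity. So the paper's closed-graph route buys generality beyond the metrizable case, while yours is more elementary given the embedding already in hand. One minor omission: part (b) asserts that $C(K,G)$ is a \emph{topological group}, which the paper disposes of by citation and your write-up does not address; the fact is standard, but a proof of the stated lemma should at least record it.
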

\begin{proof}
(a) By the Stone-Weierstra\ss{} Theorem,
the algebra
of
real-valued
polynomial functions
is dense in $C(K,\R)$, and hence also its dense countable
subset of polynomials with rational coefficients. Thus $C(K,\R^m)\cong C(K,\R)^m$
is separable.

(b) It is well known that $C(K,G)$ is a topological group (see, e.g., \cite[Theorem~11.5]{Str}).
By the Whitney Embedding Theorem (see, e.g., \cite{Hir}),
for some $n\in\N$ there exists a $C^\infty$-diffeomorphism
$j\colon G\to N$ from $G$ onto a $C^\infty$-submanifold $N\sub\R^n$.
Now
$C(K,G) \to C(K,\R^n)$, $f\mto j\circ f$
is a topological embedding. Hence $C(K,G)$ is separable, by
(a) and \ref{baby-metric}.

(c) By Whitney's Embedding Theorem, every compact smooth manifold
is $C^\infty$-diffeomorphic to a smooth submanifold of some $\R^n$
(see, e.g., \cite{Hir}).
It therefore suffices to consider a compact subset $K\sub\R^n$.
As the metrizable space $C(K,\R^n)$ is separable by~(a), also its subset $\Homeo(K)$
is separable, by \ref{baby-metric}.
It is well known that $\Homeo(K)$ is a topological group:
See, e.g., \cite[Lemma 9.4\,(c)]{Str}
for continuity of multiplication.
As the evaluation map
$\ve\colon \Homeo(K)\times K\to K$, $(\phi,x)\mto \phi(x)$
is continuous (see \cite[Lemma 9.8]{Str}), it has closed graph,
whence also
$h\colon \Homeo(K)\times K\to K$, $(\phi,x)\mto \phi^{-1}(x)$
has closed graph.\footnote{Note that $\graph(h)$ is obtained from $\graph(\ve) \sub \Homeo(K)\times K\times K$
by swapping the last and penultimate components.}
Since $K$ is compact, continuity of~$h$ follows (see, e.g., \cite[Theorem 1.21\,(b)]{Str})
and thus also continuity of
$h^\vee\colon \Homeo(K)\to \Homeo(K)$, $\psi\mto h(\psi,\sbull)=\psi^{-1}$
(cf., e.g., \cite[Theorem 3.4.1 and p.\,110]{Eng}).
\end{proof}
We shall recognize $k_\omega$-groups using the following lemma:
\begin{la}\label{givesko}
Let $G$ be a topological group. Assume that there exists a continuous homomorphism
$\alpha\colon G\to H$ to a separable metrizable topological group~$H$
and an open identity neighbourhood $U\sub H$ such that
$\alpha^{-1}(U)$ is separable in the topology induced by~$G$.
Then~$G$ is separable.
\end{la}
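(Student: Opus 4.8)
The plan is to imitate the coset‑decomposition idea used in the proof of Lemma~\ref{denso}, but with a twist: since $U$ need not be a subgroup of~$H$, the set $V:=\alpha^{-1}(U)$ is not a subgroup of~$G$ and we cannot pick coset representatives directly. Instead, I would exploit that the image $A:=\alpha(G)$ is a subgroup of~$H$ and inherits good countability properties from~$H$. Indeed, $A\sub H$ is separable by~\ref{baby-metric} and metrizable as a subspace of the metric space~$H$; a separable metric space is second countable, hence Lindel\"{o}f (every open cover admits a countable subcover). This Lindel\"{o}f property is what replaces the subgroup structure.

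First I would set $V:=\alpha^{-1}(U)$, which is open in~$G$ (as $\alpha$ is continuous and $U$ is open) and separable by hypothesis, noting that $e\in V$ because $\alpha(e)\in U$. The left translates $a(U\cap A)$, for $a$ ranging over~$A$, form an open cover of~$A$ (each $a$ lies in $a(U\cap A)$ since $e\in U$). Using that $A$ is Lindel\"{o}f, I would extract a countable subcover, writing $A=\bigcup_{n\in\N}a_n(U\cap A)\sub\bigcup_{n\in\N}a_nU$ with suitable $a_n\in A$. Choosing $g_n\in G$ with $\alpha(g_n)=a_n$ (possible as $a_n\in\alpha(G)$), I claim that $G=\bigcup_{n\in\N}g_nV$. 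To verify this, take $g\in G$; then $\alpha(g)\in A$ lies in $a_nU$ for some~$n$, and since $\alpha$ is a homomorphism we get $\alpha(g_n^{-1}g)=a_n^{-1}\alpha(g)\in U$, whence $g_n^{-1}g\in V$, i.e.\ $g\in g_nV$.

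Each set $g_nV$ is the image of~$V$ under the left translation $x\mto g_nx$, a homeomorphism of~$G$, and is therefore separable. Finally I would invoke \ref{easysep}(f) with the countable family of inclusion maps $g_nV\hookrightarrow G$, whose images cover~$G$, to conclude that~$G$ is separable.

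The step I expect to be the crux is the conceptual one of extracting separability of~$G$ from the Lindel\"{o}f property of the image~$A$, rather than from any subgroup structure of~$V$. Once one has covered~$A$ by countably many translates $a_n(U\cap A)$, the lifting of the $a_n$ to elements $g_n\in G$ and the identity $G=\bigcup_{n}g_nV$ follow mechanically from $\alpha$ being a homomorphism, and the concluding appeal to \ref{easysep}(f) is then routine.
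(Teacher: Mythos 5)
Your proof is correct, and it shares the paper's underlying strategy---covering $G$ by countably many left translates of the separable set $V=\alpha^{-1}(U)$---but it extracts the countable family by a genuinely different mechanism. The paper's proof uses \ref{baby-metric} to see that $\alpha(G)$ is separable, chooses a countable set $C\sub G$ with $\alpha(C)$ dense in $\alpha(G)$ and a countable dense set $D\sub \alpha^{-1}(U)$, and then shows directly that $C^{-1}D$ is dense in~$G$: given $g\in G$, density yields $c\in C$ with $\alpha(c)\in U\alpha(g^{-1})$, so that $cg\in\alpha^{-1}(U)$, and a net in~$D$ converging to $cg$ translates to a net in $C^{-1}D$ converging to~$g$. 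You instead observe that the subgroup $A=\alpha(G)$ is separable metrizable, hence second countable, hence Lindel\"{o}f, extract a countable subcover from the open cover $\{a(U\cap A)\}_{a\in A}$, lift the chosen points $a_n$ to elements $g_n\in G$, verify $G=\bigcup_{n\in\N}g_nV$, and hand the conclusion to Lemma~\ref{easysep}\,(f). Both arguments use the metrizability of~$H$ only through a hereditary countability property of the subspace $\alpha(G)$---hereditary separability in the paper, the Lindel\"{o}f property in yours; the two coincide for metric spaces, so neither is more general under the stated hypotheses. What your route buys is modularity and the avoidance of nets: the covering $G=\bigcup_n g_nV$, which remains implicit in the paper's argument, is made explicit, and the last step is outsourced to the stated union lemma. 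What the paper's route buys is a concrete countable dense subset of~$G$, namely $C^{-1}D$. Your auxiliary verifications are all in order: $e\in U\cap A$ makes the translates cover~$A$; each $a(U\cap A)$ is open in~$A$ because $A$ is a topological subgroup; each $g_nV$ is separable as a homeomorphic image of~$V$; and $\alpha(g_n^{-1}g)=a_n^{-1}\alpha(g)\in U$ correctly gives $g\in g_nV$.
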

\begin{proof}
Since $H$ is separable and metrizable, $\alpha(G)$ is separable in the topology induced by~$H$
(see \ref{baby-metric}). We therefore find a countable subset $C\sub G$ such that $\alpha(C)$ is dense in~$\alpha(G)$.
Let $D$ be a dense countable subset of $\alpha^{-1}(U)$.
Then $C^{-1}D$ is a countable subset of~$G$.
For $g\in G$, we find $c\in C$ such that $\alpha(c)\in U\alpha(g^{-1})$
and thus $cg\in \alpha^{-1}(U)$. Now $d_j\to cg$ for a net $(d_j)_{j\in J}$ in $D$. Then $c^{-1}d_j\in C^{-1}D$ and $c^{-1}d_j\to g$,
whence $C^{-1}D$ is dense in~$G$.
\end{proof}
The next lemma yields separability of relevant modelling spaces.
\begin{la}\label{themodsep}
Every Silva space is separable.
\end{la}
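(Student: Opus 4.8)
The plan is to exhibit $E$ as the union of countably many separable subsets and then invoke Lemma~\ref{easysep}(f). Write $E=\dl E_n$ as in the definition of a Silva space, with Banach spaces $E_1\sub E_2\sub\cdots$ and compact inclusion operators $\iota_n\colon E_n\to E_{n+1}$. The point to keep in mind is that a Banach space occurring as a step $E_n$ need not be separable in its own norm topology; the separability we need will instead come from the compactness of the bonding maps. Concretely, I would first record that the range of a compact operator is always separable, and then transport this separability along the continuous maps $E_{n+1}\to E$ into the direct limit.

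For the first step, let $T\colon X\to Y$ be a compact operator between Banach spaces and let $B\sub X$ denote the closed unit ball. By definition $\wb{T(B)}$ is compact, hence a separable metric space; thus $T(B)$ is separable by~\ref{baby-metric}, and so is each dilation $T(kB)=k\,T(B)$ for $k\in\N$ (being a homeomorphic image of $T(B)$ under $y\mto ky$). Since $T(X)=\bigcup_{k\in\N}T(kB)$ is a countable union of separable subsets of the metric space~$Y$, it is separable; if one prefers, this is exactly Lemma~\ref{easysep}(f) applied to the inclusions $T(kB)\to T(X)$. Applying this to $T:=\iota_n$ shows that the subset $\iota_n(E_n)=E_n$ of $E_{n+1}$ is separable in the norm topology of~$E_{n+1}$.

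For the second step, let $\lambda_{n+1}\colon E_{n+1}\to E$ denote the canonical map, which is continuous because $E$ carries the direct limit topology. Endowing the set $E_n$ with the (coarser) norm topology inherited from $E_{n+1}$, I obtain a separable space whose image under the continuous restriction $f_n:=\lambda_{n+1}|_{E_n}$ is the subset $E_n\sub E$, and $\bigcup_{n\in\N}f_n(E_n)=\bigcup_{n\in\N}E_n=E$. Hence Lemma~\ref{easysep}(f) yields that $E$ is separable, which proves Lemma~\ref{themodsep}. The only genuine obstacle is the first step: one must resist the temptation to assert that each $E_n$ is separable (which is not furnished by the hypotheses) and instead extract separability from the compactness of the inclusion operators, where the relative compactness of $T(B)$ does the essential work.
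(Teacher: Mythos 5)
Your proof is correct and follows essentially the same route as the paper: both arguments extract separability from the compactness of the bonding maps (the closure of the unit ball of $E_n$ in $E_{n+1}$ is compact metrizable, hence separable, and subsets of separable metric spaces are separable by~\ref{baby-metric}), then write $E$ as a countable union of separable pieces and invoke Lemma~\ref{easysep}\,(f). The only difference is bookkeeping: the paper covers $E$ directly by the dilates $mK_n$ of the compact closures, whereas you first conclude that each step $E_n$ is separable in the $E_{n+1}$-topology and then push forward along the continuous maps into the direct limit --- the underlying idea is identical.
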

\begin{proof}
Let $E_1\sub E_2\sub\cdots$ be an ascending sequence of Banach spaces,
such that all inclusion maps $E_n\to E_{n+1}$ are compact operators.
Consider the locally convex direct limit $E:=\bigcup_{n\in\N}E_n$.
Let~$B_n$ be the unit ball in~$E_n$ and~$K_n$ be its closure in~$E_{n+1}$.
Then $K_n$ is compact and metrizable as a subset of $E_{n+1}$
(hence also in~$E$), and thus separable.
As a consequence, $E=\bigcup_{n,m\in\N} mK_n$ is separable,
being a countable union of subsets which are separable in the induced topology
(see Lemma~\ref{easysep}\,(f)).
\end{proof}
In the next proposition, $\K\in\{\R,\C\}$.
\begin{prop}\label{germvers}
Let $M$ be a finite-dimensional $\K$-analytic manifold,
$G$ be a finite-dimensional $\K$-analytic Lie group which is $\sigma$-compact
and $K\sub M$ be a compact, non-empty subset.
Then $\Germ(K,G)$ $($as in Example~{\rm\ref{exgerm})}
is separable and a $k_\omega$-group.
\end{prop}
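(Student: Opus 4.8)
The plan is to combine Proposition~\ref{modelloc} with Lemma~\ref{denso}. Since $\Germ(K,G)$ is a Lie group modelled on the Silva space $\Germ(K,\cg)$ by Example~\ref{exgerm}, and every Silva space is a $k_\omega$-space, Proposition~\ref{modelloc} shows that $\Germ(K,G)$ is locally $k_\omega$. By Lemma~\ref{denso} it therefore suffices to prove that $\Germ(K,G)$ is separable, and for this I intend to invoke the criterion of Lemma~\ref{givesko}.

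The homomorphism feeding Lemma~\ref{givesko} should be the restriction map $\alpha\colon\Germ(K,G)\to C(K,G)$, $[f]\mapsto f|_K$. As the group structure on $\Germ(K,G)$ is the pointwise one inherited from~$G$, the map $\alpha$ is a group homomorphism; it is continuous because on the identity chart it factors as $[\xi]\mapsto\xi|_K$ (a continuous restriction map out of the Silva space $\Germ(K,\cg)$, continuous by the universal property of the direct limit) followed by post-composition with the chart of~$G$, and a homomorphism which is continuous on one identity neighbourhood is continuous everywhere. The target $H:=C(K,G)$ is a separable metrizable topological group: since $K$ is a compact subset of a finite-dimensional manifold, it embeds homeomorphically into some $\R^N$, and as the compact-open topology on $C(K,G)$ depends only on $K$ as a topological space, separability follows from Lemma~\ref{folklore}(b) (using that $G$ is $\sigma$-compact); metrizability holds because $K$ is compact metrizable and $G$ is metrizable.

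The crucial choice is the identity neighbourhood $U\sub H$. I would take $V\sub G$ to be the domain of the chart $\psi\colon V\to\cg$ used to build the manifold structure on $\Germ(K,G)$, and set $U:=\{h\in C(K,G):h(K)\sub V\}$, which is an open identity neighbourhood in the compact-open topology. The key point is then that $\alpha^{-1}(U)=\{[f]:f(K)\sub V\}$ is exactly the domain $\mathcal O$ of the canonical chart of $\Germ(K,G)$ around the identity, with chart map $[f]\mapsto[\psi\circ f]$ onto an open subset of $\Germ(K,\cg)$. Hence $\alpha^{-1}(U)$ is homeomorphic to an open subset of a Silva space, so it is separable by Lemma~\ref{themodsep} together with Lemma~\ref{easysep}(a). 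Lemma~\ref{givesko} then yields separability of $\Germ(K,G)$, and a final application of Lemma~\ref{denso} shows that $\Germ(K,G)$ is a $k_\omega$-group.

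I expect the main obstacle to be precisely the identification $\alpha^{-1}(U)=\mathcal O$ in the last paragraph: it requires knowing that the Lie group chart of $\Germ(K,G)$ around the identity is exactly the set of germs sending $K$ into the chart domain $V$ of~$G$, with chart map given by post-composition with $\psi$. This is the structural input from the construction of $\Germ(K,G)$ in~\cite{GER} (respectively~\cite{DGS}); once it is available, the continuity of $\alpha$ and the separability bookkeeping are routine.
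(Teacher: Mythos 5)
Your proposal is correct and takes essentially the same route as the paper: the paper likewise applies Lemma~\ref{givesko} to the restriction homomorphism $\alpha\colon \Germ(K,G)\to C(K,G)$, $[f]\mto f|_K$, with identity neighbourhood $C(K,U)$ for a chart domain $U\sub G$, identifies $\alpha^{-1}(C(K,U))=\Germ(K,U)$ with an open subset of the Silva space $\Germ(K,\cg)$ via $[f]\mto[\phi\circ f]$ (the structural input from \cite{GER}, \cite{DGS} you flagged, with a shrinking of $U$ if necessary), and concludes separability and the $k_\omega$-property via Lemmas~\ref{themodsep}, \ref{easysep}\,(a) and \ref{denso}. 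Your explicit checks of the metrizability of $C(K,G)$ and of the reduction of $K\sub M$ to a compact subset of some $\R^N$ merely spell out details the paper leaves implicit.
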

\begin{proof}
Let $\phi\colon U\to V$ be a $\K$-analytic diffeomorphism
from an open identity neighbourhood $U\sub G$ onto an
open subset $V\sub \cg$ such that $\phi(e)=0$,
where $\cg$ ($\cong \K^n$) is the modelling space of~$G$.
Then
\[
\Germ(K,V):=\{[f]\in\Germ(K,\cg)\colon f(K)\sub V\}
\]
is an open $0$-neighbourhood in the Silva space~$\Germ(K,\cg)$
and hence separable, by Lemmas~\ref{themodsep} and \ref{easysep}\,(a).
After shrinking $U$ if necessary,
\[
\Germ(K,U):=\{[f]\in\Germ(K,G)\colon f(K)\sub U\}
\]
is an open identity neighbourhood and the map
$\Germ(K,U)\to \Germ(K,V)$, $[f]\mto [\phi\circ f]$ is
a homeomorphism, whence $\Germ(K,U)$ is separable.
Recall from Lemma~\ref{folklore}\,(b) that $C(K,G)$ is a separable
topological group.
Now
\[
\alpha\colon \Germ(K,G)\to C(K,G),\quad [f]\mto f|_K
\]
is a continuous homomorphism and $C(K,U)$ an open
identity neighbourhood in $C(K,G)$ such that
$\alpha^{-1}(C(K,U))=\Germ(K,U)$ is separable.
Thus $\Germ(K,G)$ is separable and $k_\omega$,
by Lemma~\ref{givesko}.
\end{proof}
{\bf Proof of Proposition~\ref{arekomeg}.}
Let $M$ be a compact real analytic manifold.
If $G$ is a $\sigma$-compact, finite-dimensional Lie group,
taking $K:=M$ and $\K:=\R$ in Proposition~\ref{germvers},
we see that $C^\omega(M,G)=\Germ(M,G)$ is a $k_\omega$-group.\\[2.3mm]
The Lie group $\Diff^\omega(M)$ is an open subset of the set
$C^\omega(M,M)$ of all real analytic self-maps of~$M$,
endowed with its natural smooth manifold structure (and topology);
see \cite[Theorem 43.3]{KaM} or \cite[Proposition 1.9]{DaS}.
It is useful to recall an aspect of the construction of the manifold
structure on $C^\omega(M,M)$, as described in \cite{KaM} and \cite{DaS}.
Write $0_x$ for the zero vector in $T_xM$ for $x\in M$,
and $\pi_{TM}\colon TM\to M$ for the bundle projection
taking $v\in T_xM$ to~$x$.
Let $\Sigma\colon \Omega\to M$
be a real analytic local addition;
thus $\Omega$ is an open neighbourhood of $\{0_x\colon x\in M\}$
in $TM$, we have
$\Sigma(0_x)=x$
for all $x\in M$, and
\[
(\pi_{TM},\Sigma)\colon \Omega\to M\times M
\]
is a $C^\omega$-diffeomorphism onto an open subset of $M\times M$.
According to \cite[Theorem~1.6\,(a)]{DaS},
the set
\[
U_{\id_M}:=\{\psi\in C^\omega(M,M)\colon (\id_M,\psi)(M)\sub (\pi_{TM},\Sigma)(\Omega)\}
\]
is open in $C^\omega(M,M)$ and the map
\[
\Phi_{\id_M}\colon U_{\id_M}\to \Gamma^\omega(TM),\quad \psi\mto (\pi_{TM},\Sigma)^{-1}\circ (\id_M,\psi)
\]
is a homeomorphism onto an open subset of the Silva space $\Gamma^\omega(TM)$
of real analytic vector fields on~$M$.
Note that
\[
\{(\phi,\psi)\in C(M,M\times M)\colon (\phi,\psi)(M)\sub (\pi_{TM},\Sigma)(\Omega)\}
\]
is open in $C(M,M\times M)$. As
$C(M,M)\to C(M,M\times M)$, $\psi\mto(\id_M,\psi)$
is a continuous map, we deduce that
\[
W:=\{\psi\in\Homeo(M)\colon (\id_M,\psi)(M)\sub (\pi_{TM},\Sigma)(\Omega)\}
\]
is an open $\id_M$-neighbourhood in $\Homeo(M)$.
The inclusion map\linebreak
$\alpha\colon \Diff^\omega(M)\to\Homeo(M)$, $\phi\mto\phi$,
is a continuous homomorphism.
Now
\begin{eqnarray*}
\alpha^{-1}(W)&=&W\cap\Diff^\omega(M)
=\{\psi\in\Diff^\omega(M)\colon (\id_M,\psi)(M)\sub (\pi_{TM},\Sigma)(\Omega)\}\\
&=&\Diff^\omega(M)\cap U_{\id_M}
\end{eqnarray*}
is an open identity neighbourhood in $\Diff^\omega(M)$.
As the homeomorphism $\Phi_{\id_M}$ takes $\alpha^{-1}(W)$ onto an open subset of the Silva space
$\Gamma^\omega(TM)$, we deduce from Lemmas \ref{themodsep} and \ref{easysep}\,(a)
that $\alpha^{-1}(W)$ is separable.
Since $\Homeo(M)$ is separable and metrizable by Lemma~\ref{folklore}\,(c),
we deduce with Lemma~\ref{givesko} that $\Diff^\omega(M)$ is separable
and hence $k_\omega$, by Lemma~\ref{denso}.\,\Punkt\\[2.3mm]
In the next proposition, $\K\in\{\R,\C\}$.
\begin{prop}\label{diffgvers}
For each $n\in\N$ and non-empty compact set $K\sub\K^n=:E$, the Lie group
$\GermDiff(K)$ is separable and a $k_\omega$-group.
\end{prop}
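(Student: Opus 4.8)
The plan is to follow the strategy already used for $\Germ(K,G)$ in Proposition~\ref{germvers} and for $\Diff^\omega(M)$ in the proof of Proposition~\ref{arekomeg}: since $\GermDiff(K)$ is modelled on the Silva space $\Germ(K,E)_0$, it is locally $k_\omega$ by Proposition~\ref{modelloc}, so by Lemma~\ref{denso} it suffices to prove that $\GermDiff(K)$ is separable. The decisive simplification, by contrast with $\Germ(K,G)$, is that every element of $\GermDiff(K)$ fixes $K$ pointwise; I expect this to force the whole group into a single chart, so that separability can be read off directly from the modelling space.

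Concretely, I would study the map $\Theta\colon\GermDiff(K)\to\Germ(K,E)_0$, $[f]\mto[f-\id_E]$, which is well defined since $(f-\id_E)|_K=0$. First I would verify that $\Theta$ is a bijection onto $\mathcal{G}:=\{[g]\in\Germ(K,E)_0\colon \id_E+Dg(x)\in\GL(E)\text{ for all }x\in K\}$. The inclusion $\Theta(\GermDiff(K))\sub\mathcal{G}$ and injectivity are immediate; surjectivity onto $\mathcal{G}$ is the point needing care: given $[g]\in\mathcal{G}$, the map $f:=\id_E+g$ has invertible differential along~$K$ and hence is a local diffeomorphism near~$K$, and since $f|_K=\id_K$ is injective, a compactness argument shows that $f$ is injective on some neighbourhood of~$K$, so that $[f]\in\GermDiff(K)$ with $\Theta([f])=[g]$. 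That $\mathcal{G}$ is open I would deduce from the continuity of the map $\Germ(K,E)_0\to C(K,\K)$, $[g]\mto(x\mto\det(\id_E+Dg(x)))$, together with the fact that the nowhere-vanishing functions form an open subset of $C(K,\K)$.

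Recalling from \cite{JFA} that the Lie group structure on $\GermDiff(K)$ is precisely the one for which $\Theta$ is a (global) chart, I would then identify $\GermDiff(K)$ with the open subset $\mathcal{G}$ of the Silva space $\Germ(K,E)_0$. Since that space is separable by Lemma~\ref{themodsep}, its open subset $\mathcal{G}$ is separable by Lemma~\ref{easysep}\,(a); hence $\GermDiff(K)$ is separable, and therefore a $k_\omega$-group by Lemma~\ref{denso}.

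The main obstacle is the global-chart claim: one must know that $\Theta$ is a homeomorphism onto the \emph{open} set~$\mathcal{G}$ for the Lie group topology on $\GermDiff(K)$, not merely a bijection of underlying sets. Once this is granted from the construction in \cite{JFA}, the rest is routine. If one instead prefers to imitate Proposition~\ref{germvers} literally, the same end can be reached through Lemma~\ref{givesko}, applied to the continuous homomorphism $\alpha\colon\GermDiff(K)\to C(K,\GL(E))$, $[f]\mto(x\mto Df(x))$ --- this is a homomorphism because $h(x)=x$ for $x\in K$ gives $D(f\circ h)(x)=Df(h(x))\,Dh(x)=Df(x)\,Dh(x)$ --- whose target is separable (Lemma~\ref{folklore}\,(b)) and metrizable; here the preimage of a small identity neighbourhood is again an open subset of the single chart $\mathcal{G}$, hence separable, but this route relies on the same single-chart fact.
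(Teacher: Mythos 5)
Your proposal is correct and takes essentially the same route as the paper: the paper likewise invokes \cite{JFA} for the key fact that $[f]\mto[\id_E+f]$ is a homeomorphism from an open subset $\Omega\sub\Germ(K,E)_0$ onto $\GermDiff(K)$ with its Lie group topology (your global-chart claim), deduces separability from Lemma~\ref{themodsep}, and concludes the $k_\omega$-property via Proposition~\ref{modelloc} and Lemma~\ref{denso}. The only difference is inessential: your explicit identification of the image as $\mathcal{G}$ (via invertibility of $\id_E+Dg$ along~$K$, with the determinant/compactness arguments) is correct but is subsumed in the citation of \cite{JFA}, where the paper simply defines $\Omega$ as the preimage of $\GermDiff(K)$ and cites its openness.
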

\begin{proof}
As shown in \cite{JFA},
$\Germ(K,E)_0:=\{[f]\in \Germ(K,E)\colon f|_K=0\}$
is a Silva space,
$\Omega:=\{ [f]\in\Germ(K,E)_0\colon [\id_E+f]\in\GermDiff(K)\}$
is open in $\Germ(K,E)_0$ and the bijection
$\Omega\to\GermDiff(K)$,
$[f]\mto [\id_E+f]$
is a homeomorphism for the Lie group
structure on
$\GermDiff(K)$. Since~$\Omega$ is separable by Lemma~\ref{themodsep}
and \ref{baby-metric}, also $\GermDiff(K)$ is separable
and hence~$k_\omega$, by Proposition~\ref{modelloc} and Lemma~\ref{denso}.
\end{proof}
{\small
{\bf Helge  Gl\"{o}ckner}, Institut f\"{u}r Mathematik,
Universit\"at Paderborn,\\
Warburger Str.\ 100, 33098 Paderborn, Germany; {\tt glockner@math.upb.de}}\vfill
\end{document}